\numberwithin{equation}{section}
\newtheoremstyle{theorem}{3pt}{3pt}%
{\it}
{}
{\bfseries}
{:}
{.5em}
{}
\theoremstyle{theorem}
\newtheorem{theorem}{Theorem}[section]
\newtheorem{proposition}[theorem]{Proposition}
\newtheorem{definition}[theorem]{Definition}
\newtheoremstyle{example}{3pt}{3pt}%
{}
{}
{\sc}
{:}
{.5em}
{}
\theoremstyle{example}
\newtheoremstyle{remark}{3pt}{3pt}%
{}
{}
{\sc}
{:}
{.5em}
{}
\theoremstyle{remark}
\numberwithin{equation}{section}
\newcommand{\hl}{\\\hline}
\newcommand{\thismonth}{\ifcase\month\or
  January\or February\or March\or April\or May\or June\or
  July\or August\or September\or October\or November\or December\fi
  \space\number\year}
\newcommand{\low}{\@ifnextchar^{}{^{\vphantom x}}}
\newcommand{\high}{\@ifnextchar_{}{_{\vphantom I}}}
\DeclareSymbolFont{script}{U}{eus}{m}{n}
\DeclareSymbolFontAlphabet{\mathscr}{script}
\DeclareMathSymbol{\EuWedge}{0}{script}{"5E}
\DeclareMathAlphabet{\mathrmsl}{OT1}{cmr}{m}{sl}
\newcommand{\rssymb}[2]{\newcommand{#1}{{\mathrmsl{#2}}}}
\newcommand{\calsymb}[2]{\newcommand{#1}{{\mathcal{#2}}}}
\newcommand{\bbsymb}[2]{\newcommand{#1}{{\mathbb{#2}}}}
\newcommand{\lieoper}[2]{\newcommand{#1}{\mathop
  {\mathfrak{#2}\null}\nolimits}}
\newcommand{\oper}[3][n]{\newcommand{#2}{\mathop
  {\mathrm{#3}\null}\ifx n#1\nolimits\else\limits\fi}}
\newcommand{\rsoper}[3][n]{\newcommand{#2}{\mathop
  {\mathrmsl{#3}\null}\ifx n#1\nolimits\else\limits\fi}}
\bbsymb\C{C} \bbsymb\F{F} \bbsymb\HQ{H}\bbsymb\N{N} \bbsymb\Q{Q}
\bbsymb\R{R} \bbsymb\U{U} \bbsymb\V{V} \bbsymb\W{W} \bbsymb\Z{Z}
\bbsymb\bbf{F} \bbsymb\bbk{K} \bbsymb\bbi{I} \bbsymb\bbl{L} \bbsymb\bbo{O}
\bbsymb\bbj{J}
\bbsymb\bby{Y}
\bbsymb\bbp{P}
\bbsymb\bba{A}
\calsymb\cA{A} \calsymb\cB{B} \calsymb\cC{C} \calsymb\cD{D} \calsymb\cE{E}
\calsymb\cF{F} \calsymb\cG{G} \calsymb\cH{H} \calsymb\cI{I} \calsymb\cJ{J}
\calsymb\cK{K} \calsymb\cL{L} \calsymb\cM{M} \calsymb\cN{N} \calsymb\cO{O}
\calsymb\cP{P} \calsymb\cQ{Q} \calsymb\cR{R} \calsymb\cS{S} \calsymb\cT{T}
\calsymb\cU{U} \calsymb\cV{V} \calsymb\cW{W} \calsymb\cX{X} \calsymb\cY{Y}
\calsymb\cZ{Z}
\renewcommand{\geq}{\geqslant} 
\oper\End{End} \oper\Hom{Hom}                    
\oper\Sym{Sym} \oper\Skew{Skew}
\oper\Aut{Aut}                                   
\oper\GL{GL} \oper\SL{SL}\oper\Symp{Sp}
\oper\CO{CO} \oper\On{O} \oper\SO{SO} \oper\Pin{Pin} \oper\Spin{Spin}
\oper\CU{CU} \oper\Un{U} \oper\SU{SU} \oper\PSU{PSU}
\rsoper\Diff{Diff} \rsoper\SDiff{SDiff}
\lieoper\der{der}                                
\lieoper\gl{gl} \lieoper\sgl{sl}\lieoper\symp{sp}
\lieoper\co{co} \lieoper\so{so} \lieoper\spin{spin}
\lieoper\cu{cu} \lieoper\un{u}  \lieoper\su{su}
\rsoper\Vect{Vect} \rsoper\Ham{Ham}
\def\la#1{\hbox to #1pc{\leftarrowfill}}
\def\ra#1{\hbox to #1pc{\rightarrowfill}}
\newcommand{\norm}[2][]{|\mkern-2mu|#2|\mkern-2mu|
  _{\lower1pt\hbox{${}_{#1}$}}}
\newcommand{\Norm}[2][]{\bigl|\mkern-3mu\bigr|#2\bigr|\mkern-3mu\bigr|
  _{\lower1pt\hbox{${}_{#1}$}}}
\rsoper\dimn{dim}                           
\rsoper\grad{grad}                          
\rsoper\kernel{ker}\rsoper\image{im}        
\rsoper\alt{alt}   \rsoper\sym{sym}         
\rsoper\Ad{Ad}     \rsoper\ad{ad}           
\rsoper\CoAd{CoAd} \rsoper\coad{coad}       
\rsoper\trace{tr}  \rsoper\trfree{tf}       
\rsoper\detm{det}                           
\rsoper\Vol{Vol}                            
\rsoper\divg{div}                           
\rsoper\sign{sign}                          
\rssymb\iden{id}                            
\rssymb\vol{vol}                            
\oper\Imag{Im}\oper\Real{Re}                
\newcommand{\sd}{{\raise1pt\hbox{$\scriptscriptstyle +$}}}
\newcommand{\asd}{{\raise1pt\hbox{$\scriptscriptstyle -$}}}
\newcommand{\sdasd}{{\raise1pt\hbox{$\scriptscriptstyle\pm$}}}
\newcommand{\asdsd}{{\raise1pt\hbox{$\scriptscriptstyle\mp$}}}
\rsoper\scal{scal}
\def\kahl/{k\"ahler}
\def\Kahl/{K{\"a}hler}
\begin{document}
\title[Smale Manifolds and Lorentzian Sasaki-Einstein Geometry]
{A Note On Smale Manifolds and Lorentzian Sasaki-Einstein Geometry}
\author[R.Gomez]{Ralph R. Gomez}
\address{Ralph R. Gomez: Swarthmore College}
\email{rgomez1@swarthmore.edu}

\date{\thismonth}
\begin{abstract}
In this note, we construct new examples of Lorentzian Sasaki-Einstein (LSE) metrics on Smale manifolds $M.$
It has already been established in \cite{Gmz2} that such metrics exist on the so-called torsion
free Smale manifolds, i.e. the $k$-fold connected sum of $S^{2}\times S^{3}.$ Now, we show that
LSE metrics exist on Smale manifolds for which $H_{2}(M,\mathbb{Z})_{tor}$ is nontrivial.
In particular, we show that most simply-connected positive Sasakian rational homology $5$-spheres are
also negative Sasakian (hence Lorentzian Sasaki-Einstein).
Moreover, we show that for each pair of positive integers $(n,s)$ with $n,s >1$, there exists a Lorentzian Sasaki-Einstein Smale manifold
$M$ such that $H_{2}(M,{\mathbb{Z}})_{tors}=(\mathbb{Z}/n)^{2s}$. Finally, we are able to construct
so-called mixed Smale manifolds (connect sum of torsion free Smale manifolds with rational homology spheres) which admit LSE metrics and have arbitrary second Betti number. This gives infinitely many
examples which do not admit positive Sasakian structures. These results partially address the open problems
in \cite{BG6}.
\end{abstract}
\maketitle
\vspace{-2mm}

\bigskip

\section{Introduction}
Let $M$ be a compact, simply-connected, $5$-manifold with $w_{2}(M)=0$. By a theorem of Smale \cite{Sm}(see section 2 also),
$H_{2}(M,\mathbb{Z})$ uniquely determines $M.$ Following \cite{BN1}, let us call such manifolds $M$ \textit{Smale} manifolds
and divide them up into three classes: \noindent \emph{i.}) torsion free Smale manifolds with positive
second Betti number, that is $k$-fold connected sums of $S^{2}\times S^{3}$  for some $k$ \emph{ii.)} rational homology spheres, including $S^{5}$ and
\emph{iii.)} mixed Smale manifolds, which are the connected sums of torsion free Smale manifolds
with rational homology spheres. There has been a flurry of work (see \cite{BG6} for sweeping survey) dedicated to determining
which Smale manifolds admit so-called quasi-regular positive Sasakian structures. A quasi-regular positive (negative)
Sasakian manifold can be viewed as a principal circle orbibundle over a complex algebraic
orbifold $(X,\Delta=\sum(1-\frac{1}{m_{i}})D_{i})$ such that the orbifold first Chern class is
positive (negative). In \cite{BGN4}, it was shown that positive Sasakian structures exist on an arbitrary connected
sum of torsion free Smale manifolds. Then in \cite{Kol1} Koll\'ar established a classification theorem
for simply-connected rational homology $5$-spheres admitting positive Sasakian structures. There are partial results
for the mixed Smale manifolds. See \cite{BG6} for more complete survey and \cite{BN1} for more recent results. \\
\indent Negative Sasakian geometry on Smale manifolds is not nearly as developed. In \cite{Gmz2}, it was shown
that negative Sasakian structures exist on the $k$-fold connected sum of $S^{2}\times S^{3}$ for all $k.$
It was shown in \cite{Gmz1} that the simply-connected rational homology $5$-sphere $M_{k}$ admits both positive and negative Sasakian structures for $k\geq 5$. This example was also given later in the book \cite{BG6}. Until now, negative Sasakian
structures on other simply-connected positive Sasakian rational homology $5$-spheres have not been constructed. It turns out that a negative Sasakian structure can give rise to negative Sasaki $\eta$-Einstein as well as Lorentzian Sasaki-Einstein metrics.
Thus, negative Sasakian geometry is a strong tool in constructing such manifolds with these metric properties. In fact, Boyer and Galicki
formulated the following open problems on negative Sasakian geometry in dimension five on pages $359-360$ in \cite{BG6}:\\

\noindent \emph{a.) Determine which simply-connected rational homology 5-spheres admit negative Sasakian structures.}\\
\noindent \emph{b.) Determine which torsion groups correspond to Smale-Barden manifolds admitting negative Sasakian structures.}\\

\noindent The more general Smale-Barden five-manifolds correspond to the non-spin case and that will not be addressed in this paper.
The principle aim of this note is to make a significant first step towards solutions to the above problems. The main theorem is:\\
\textbf{Theorem}\\
\noindent (1.) \textit{The following simply-connected positive Sasakian rational homology $5$-spheres admit
a negative Sasakian structure:
$$M_{m},\hspace{.3cm} 2M_{3}, \hspace{.3cm} 3M_{3},\hspace{.3cm} 4M_{3},\hspace{.3cm}2M_{4},\hspace{.3cm} 2M_{5}$$
where $m$ is a positive integer with $m\geq 5$ and $m\neq 30j$ for some positive integer $j$. Consequently, these manifolds admit both positive and negative Sasakian structures.\\}


\noindent (2.) \textit{For every pair of positive integers $(n,s)$ with $n,s > 1$  there exists a Smale manifold $M$ which admits a negative Sasakian structure such that
$H_{2}(M,\mathbb{Z})_{tors}=(\mathbb{Z}/n)^{2s}$. }\\

\noindent (3.) \textit{There exists mixed Smale manifolds $M$ which are negative Sasakian and have arbitrary second Betti number
 but do not admit any positive Sasakian structure. Moreover, all the manifolds listed in 1.) - 3.) can be realized as a links of an isolated hypersurface singularity at the origin and admit negative Saskaki $\eta$-Einstein, hence Lorentzian Sasaki-Einstein metrics.}\\

The only manifolds from the classification list of positive Sasakian
simply connected rational homology $5$-spheres missing
in the above theorem is $nM_{2}$ and $M_{m}$ where $m=2,3,4.$ It is interesting to note that generally, it is not known
for which $n$ the manifolds $nM_{2}$ are Sasaki-Einstein of positive scalar curvature (See \cite{BN1} for some recent known values of $n.$)\\
\indent It is worthwhile to contrast the second part of the above theorem with $5$-dimensional simply-connected positive Sasakian geometry. Let $M$
be a positive quasi-regular Sasakian manifold which is simply connected. Then the torsion group in $H_{2}(M,\mathbb{Z})$ is constrained to be one of the following \cite{Kol1}:
\begin{equation}
(\mathbb{Z}/m)^2,(\mathbb{Z}/5)^4,(\mathbb{Z}/4)^4,(\mathbb{Z}/3)^4,(\mathbb{Z}/3)^6,(\mathbb{Z}/3)^8,(\mathbb{Z}/2)^{2n}
\end{equation}
for any $n,m\in \mathbb{Z}^{+}.$\\
\indent Continuing the comparison with positive Sasaki-Einstein geometry of Smale manifolds, there are no
examples of Sasaki-Einstein structures on mixed Smale manifolds with second Betti number larger than nine. The
third part of our theorem illustrates that is far from the case in Lorentzian Sasaki-Einstein geometry.\\
\indent The organization of the note is as follows. The second section recalls the Sasakian geometry on $5$-dimensional links.
The third section gives the construction. Finally, the fourth section gives some comments and remarks.

\section{Basics of Negative Sasakian Geometry On Links of Isolated Hypersurface Singularities}

A \emph{Sasakian} manifold is a smooth Riemannian manifold $M^{2n+1}$ with some structure tensors $\mathcal{S}=(\xi,\eta,\Phi,g)$ that makes $M^{2n+1}$ into a \emph{normal contact metric structure}. (We may sometimes refer to $\mathcal{S}$ as a Sasasakian structure on the manifold.) The Reeb vector field $\xi$ is a Killing vector field.  The Reeb vector field foliates $M$ by one-dimensional leaves. The $1$-form $\eta$ is a contact form and
$\Phi$ is an endomorphism of the tangent bundle such that $\Phi$ restricted to ker$\eta=\mathcal{D}$ is an integrable almost complex
structure. Moreover, if the leaves are compact, we say the Sasakian manifold
is \emph{quasi-regular}. The corresponding leaf space $(X,\Delta)$ is a K\"ahler orbifold and so one can view
 a quasi-regular Sasakian manifold as the total space of principle circle orbibundle (or Seifert $S^1$ bundle). Another equally useful definition of a Sasaki manifold involves looking at the cone geometry. Denote the metric cone on $M$ by $(C(M),\overline{g})=(M \times\mathbb{R}_{+},dr^{2}+r^{2}g)$, where $r>0$. The quick definition of a Sasakain manifold $M$ is: the cone $C(M)$ is K\"ahler iff $M$ is Sasakian. Furthermore, a Sasaki-Einstein manifold with Einstein constant $2n$ corresponds to $C(M)$ being Ricci-flat Calabi-Yau.\\
\indent A plethora of examples of quasi-regular Sasakian manifolds can be obtained by constructing links of isolated hypersurface singularties.
This construction was initially implemented by Boyer and Galicki in \cite{BG2} to construct numerous examples of
Sasaki-Einstein manifolds. Recall, the weighted $\mathbb{C}^{*}$ action on $\mathbb{C}^{n+1}$ defined by
$$(z_{0},...,z_{n})\longmapsto (\lambda^{w_{0}}z_{0},...,\lambda^{w_{n}}z_{n})$$
where $w_{i}$ are the weights which are positive integers and $\lambda \in \mathbb{C}^{*}$. We use the standard notation $\textbf{w}=(w_0,...,w_n)$ to denote a weight vector. In addition, we assume
\begin{equation}
gcd(w_{0},...,w_{n})=1.
\end{equation}
\begin{definition}
A polynomial $f\in \mathbb{C}[z_{0},...,z_{n}]$ is weighted homogenous if it
satisfies $$f(\lambda^{w_{0}}z_{0},...,\lambda^{w_{n}}z_{n})=\lambda^{d}f(z_{0},...,z_{n})$$
for any $\lambda \in \mathbb{C}^{*}$ and the positive integer $d$ is the degree of $f$.
\end{definition}
Let the \emph{weighted affine cone} $C_{f}$ be a hypersurface in $\mathbb{C}^{n+1}$
defined by the equation $f(z_{0},...,z_{n})=0$ where the origin is an isolated singularity.
The link of the isolated hypersurface singularity, denoted by $L_{f}$, is defined as $$L_{f}=C_{f}\cap S^{2n+1}$$ where
$S^{2n+1}$ is viewed as the unit sphere in $\mathbb{C}^{n+1}$. Milnor showed in \cite{Milnor} that $L_{f}$ is a compact, smooth $2n-1$ manifold
which is $n-2$ connected. Furthermore, it is widely known that $L_f$ admits a quasi-regular Sasakian structure
inherited from the ``weighted Sasakian structure'' $\mathcal{S}_{\textbf{w}}=(\xi_{\textbf{w}},\eta_{\textbf{w}},\Phi_{\textbf{w}},g_{\textbf{w}})$ on the weighted sphere $S_{\textbf{w}}^{2n+1}$. The flow of the Reeb vector field $\xi_{\textbf{w}}$ generates a locally free weighted $S^{1}_{\textbf{w}}$ action such that the quotient of
$S_{\textbf{w}}^{2n+1}$ by this circle action produces a K\"ahler orbifold $\mathbb{P}(\textbf{w})$. Moreover the quotient space $\mathcal{Z}_{f}$ of $L_{f}$ by $S^{1}_{\textbf{w}}$ is also a K\"ahler orbifold which is a hypersurface in weighted projective space. (See \cite{BG6} Chapter 9 for details.)
The Boyer-Galicki construction is typically described succinctly by the commutative diagram:
\begin{equation}
\begin{matrix}
L_{f} &&\longrightarrow&& S^{2n+1}_{\textbf{w}} \\
 \Big\downarrow \pi &&&&\Big\downarrow\\
\mathcal{Z}_{f} &&\longrightarrow&& \mathbb{P}(\textbf{w}),
\end{matrix}
\end{equation}
where the vertical arrows are Seifert-$S^1$ bundles, the horizontal rows are Sasakian and K\"ahler embeddings respectively \cite{BGN2} and
$\pi^{*}c_{1}^{orb}(\mathcal{Z}_f)=c^{B}_{1}(\mathcal{F}_{\xi})$, where $c_{1}^{orb}(\mathcal{Z}_f)$ is the orbifold first Chern class and
$c^{B}_{1}(\mathcal{F}_{\xi})$ is the basic first Chern class of the characteristic foliation $\mathcal{F}_{\xi}$ generated by the quasiregular Reeb vector field $\xi.$\\
\indent We will be studying Sasakian structures on a particular class of $5$-manifolds. The main reason is because of the following classification
theorem of Smale \cite{Sm}.
\begin{theorem}\emph{(Smale)} Let $M$ be a closed simply-connected $5$-manifold with $w_{2}(M)=0$. Then the finitely generated Abelian group $H_{2}(M,\mathbb{Z})$ uniquely determines $M$ and is diffeomorphic to $$kM_{\infty}\#M_{m_{1}}\#\cdots M_{m_{n}}$$
where $m_{i}|m_{i+1}$ and $m_{1}>1$ if $M$ is not $S^{5}.$
\end{theorem}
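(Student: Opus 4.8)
The plan is to split the argument into an algebraic part, which pins down exactly which groups arise as $H_{2}(M;\mathbb{Z})$ and extracts the diffeomorphism invariant, and a geometric part, which realizes standard models and proves uniqueness by handle theory. For the algebra, I would start from Poincar\'e duality and the universal coefficient theorem: simple connectivity gives $H_{1}(M)=0=H^{1}(M)$, hence $H_{4}(M)=0$, $H_{0}\cong H_{5}\cong\mathbb{Z}$, $H_{3}(M)\cong\mathbb{Z}^{k}$ is torsion free, and $H_{2}(M)\cong\mathbb{Z}^{k}\oplus T$ with $T$ finite. The essential extra structure is the torsion linking pairing $\ell\colon T\times T\to\mathbb{Q}/\mathbb{Z}$; Poincar\'e duality makes it nonsingular and it is symmetric, and the hypothesis $w_{2}(M)=0$ is precisely what forces $\ell$ to be hyperbolic, so that $T\cong A\oplus A$ for a finite abelian group $A$ with $\ell$ the standard hyperbolic pairing. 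Writing $A\cong\mathbb{Z}/m_{1}\oplus\cdots\oplus\mathbb{Z}/m_{n}$ with $m_{i}\mid m_{i+1}$ and $m_{1}>1$, one records $H_{2}(M;\mathbb{Z})$ — equivalently the pair $(k,A)$ — as the candidate complete invariant.

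Next I would exhibit the models: $M_{\infty}=S^{2}\times S^{3}$, and for each $m>1$ a rational homology $5$-sphere $M_{m}$ with $H_{2}(M_{m};\mathbb{Z})\cong(\mathbb{Z}/m)^{2}$, for instance as a link of an isolated hypersurface singularity, or directly as a handlebody with one $0$-handle, a $2$-handle, a $3$-handle and a $5$-handle whose attaching data realize the prescribed linking form; connected sums then realize every admissible $(k,A)$, with $S^{5}$ the case $k=0$, $A=0$ (and connect-summing with $S^{5}$ changes nothing). The content is uniqueness. Here I would take a smooth handle decomposition of $M$; since $\pi_{1}(M)=1$ one can trade away the $1$-handles and, dually, the $4$-handles, so that $M$ is built from a $0$-handle, some $2$-handles, some $3$-handles and a $5$-handle. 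The $2$-handles are attached along an unlink of circles in $S^{4}=\partial D^{5}$, and $w_{2}(M)=0$ constrains the framings of the surviving ones (a priori classified by $\pi_{1}(\mathrm{SO}(3))\cong\mathbb{Z}/2$) to be trivial, so those contribute $S^{2}\times S^{3}$ summands; meanwhile the algebraic intersection matrix between the $2$- and $3$-handles presents $H_{2}(M)$, and by handle slides and isotopies one brings it to block form consisting of a standard $S^{2}\times S^{3}$ block and standard $M_{m_{i}}$ blocks, producing handle data abstractly matching those of the model with the same $H_{2}$.

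Finally, to upgrade ``same normalized handle data'' to an actual diffeomorphism, I would construct an $h$-cobordism from $M$ to $M'=kM_{\infty}\#M_{m_{1}}\#\cdots\#M_{m_{n}}$ and invoke the $h$-cobordism theorem, legitimate because the cobordism is $6$-dimensional and simply connected, to conclude $M\cong M'$. I expect the main obstacle to be exactly the normal-form step: showing that the intersection/linking matrix of the $2$- and $3$-handles can always be brought, by geometrically realizable moves, to block-diagonal standard form, and that no secondary invariant beyond $H_{2}(M;\mathbb{Z})$ survives — it is here that $w_{2}(M)=0$ enters decisively, since in the general (non-spin, Barden) setting a further $\mathbb{Z}/2$-type invariant genuinely occurs. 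The classification of linking forms carried by spin $5$-manifolds underlies this step and is the technical heart of the proof.
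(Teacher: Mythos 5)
The paper offers no proof of this statement: it is Smale's classification theorem, quoted from \cite{Sm} and used purely as a black box in Section 3 to read off the diffeomorphism type of the links from $H_{2}$. So there is no in-paper argument to measure you against; the relevant benchmark is Smale's original handlebody-theoretic proof, and your sketch does track its broad outline correctly (trading away $1$- and $4$-handles, unknottedness and unlinkedness of the attaching circles in $S^{4}$, the $\pi_{1}(\mathrm{SO}(3))\cong\mathbb{Z}/2$ framing ambiguity killed by $w_{2}(M)=0$, normalization of the middle-dimensional handle data, and a simply connected $6$-dimensional $h$-cobordism to finish).

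Two genuine problems remain. First, the linking form $\ell$ on the torsion of $H_{2}$ of a closed oriented $5$-manifold is \emph{skew}-symmetric, $\ell(x,y)=-\ell(y,x)$ in $\mathbb{Q}/\mathbb{Z}$, not symmetric as you assert. The sign is not cosmetic: a nonsingular symmetric linking form on a finite group need not split it as $A\oplus A$ (lens spaces already show this one dimension down), whereas for skew forms the only obstruction to hyperbolicity is a $\mathbb{Z}/2$-type invariant that $w_{2}(M)=0$ eliminates — the Wu manifold $\mathrm{SU}(3)/\mathrm{SO}(3)$, with torsion $\mathbb{Z}/2$ and $\ell(x,x)=1/2$, is exactly the non-spin exception. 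As written, your algebraic step does not deliver $T\cong A\oplus A$. Second, you explicitly defer the normal-form step — bringing the $2$-/$3$-handle boundary data to block-diagonal standard form by geometrically realizable moves and showing that no secondary invariant beyond $H_{2}(M;\mathbb{Z})$ survives. That step \emph{is} the content of Smale's theorem; flagging it as ``the main obstacle'' without an argument leaves the uniqueness claim unproved. For the purposes of this paper the correct move is simply to cite \cite{Sm}, as the author does; if you want a full proof, the missing normal-form argument (and Wall's classification of skew linking forms) is where the work lives.
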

\noindent The $5$-manifolds $M_{m}$ are such that $H_{2}(M_{m},\mathbb{Z})=\mathbb{Z}/m\oplus \mathbb{Z}/m$ and $kM_{\infty}$
is the $k$-fold connected sum of $S^{2}\times S^{3}=M_{\infty}$, where $k=rankH_{2}(M,\mathbb{Z})$. Since we want to explore definite Sasakian structures on these manifolds, we need a
\begin{definition}
A Sasakian structure on $M$ is \emph{negative} \emph{(positive)} if the basic first Chern class
$c^{B}_{1}(\mathcal{F}_{\xi})$ is represented by a negative(positive) definite $(1,1)$-form.
\end{definition}
Therefore, obtaining a negative Sasakian structures on a link of an isolated hypersurface singularity corresponds to finding
a hypersurface in weighted projective space with negative orbifold first Chern class. This amounts to
\begin{proposition}
A link $L_{f}$ of degree $d$ is a negative Sasakian manifold if $d-\sum_{i}w_{i} > 0.$
\end{proposition}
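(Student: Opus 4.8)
The plan is to track the sign of the orbifold first Chern class of the quotient K\"ahler orbifold $\mathcal{Z}_f$ through the Boyer--Galicki diagram displayed above and then apply the definition of a negative Sasakian structure. First I would invoke the orbifold adjunction formula for a quasi-smooth hypersurface of degree $d$ in the weighted projective space $\mathbb{P}(\mathbf{w})$: because the origin is an isolated singularity of the weighted affine cone $C_f$, the hypersurface $\mathcal{Z}_f\subset\mathbb{P}(\mathbf{w})$ is quasi-smooth and well-formed, and since $-K^{orb}_{\mathbb{P}(\mathbf{w})}=|\mathbf{w}|\,H$ (with $|\mathbf{w}|=\sum_i w_i$ and $H$ the orbifold divisor class of $\mathcal{O}_{\mathbb{P}(\mathbf{w})}(1)$), adjunction gives $c_1^{orb}(\mathcal{Z}_f)=(|\mathbf{w}|-d)\,H|_{\mathcal{Z}_f}$. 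The restriction $H|_{\mathcal{Z}_f}$ is represented by a positive definite basic $(1,1)$-form, namely the transverse K\"ahler form inherited from the Fubini--Study orbifold metric on $\mathbb{P}(\mathbf{w})$.

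Next I would use the hypothesis $d-\sum_i w_i>0$, i.e. $|\mathbf{w}|-d<0$, to conclude that $c_1^{orb}(\mathcal{Z}_f)$ is represented by a \emph{negative} definite $(1,1)$-form. Then, invoking the identity $\pi^{*}c_1^{orb}(\mathcal{Z}_f)=c^B_1(\mathcal{F}_\xi)$ stated with the commutative diagram above, together with the fact that the Seifert projection $\pi\colon L_f\to\mathcal{Z}_f$ identifies basic $(1,1)$-forms on $L_f$ with orbifold $(1,1)$-forms on $\mathcal{Z}_f$ in a way preserving type and definiteness (the K\"ahler structure on $\mathcal{Z}_f$ being precisely the transverse K\"ahler structure of the weighted Sasakian structure on $L_f$), I obtain that $c^B_1(\mathcal{F}_\xi)$ is represented by a negative definite $(1,1)$-form. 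By the definition of a negative Sasakian structure, $L_f$ is therefore negative Sasakian.

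The only nonroutine ingredient is the orbifold adjunction computation and, beneath it, the verification that $\mathcal{Z}_f$ is a well-formed quasi-smooth hypersurface so that basic/orbifold de Rham cohomology behaves as in the smooth case; this is exactly where the hypothesis that $f$ has an isolated singularity at the origin and the normalization $\gcd(w_0,\dots,w_n)=1$ enter, and it is the content of Chapter~9 of \cite{BG6}, which may simply be cited. The remainder is bookkeeping with the diagram and Definition of negative Sasakian structure already recorded above, so the argument is short.
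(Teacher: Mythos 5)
The paper does not actually supply a proof of this proposition; it simply cites Proposition 9.2.4 of \cite{BG6}. Your argument is precisely the standard one underlying that citation --- orbifold adjunction giving $c_1^{orb}(\mathcal{Z}_f)=(|\mathbf{w}|-d)\,H|_{\mathcal{Z}_f}$ with $H$ positive, followed by the pullback identity $\pi^{*}c_1^{orb}(\mathcal{Z}_f)=c_1^{B}(\mathcal{F}_\xi)$ and the definition of a negative Sasakian structure --- so it is correct and faithful to the intended proof. One caveat: your claim that the isolated singularity at the origin makes $\mathcal{Z}_f$ \emph{well-formed} is neither correct nor needed. Quasi-smoothness does follow from the isolated singularity hypothesis, but well-formedness is a separate condition on how $\mathcal{Z}_f$ meets the singular strata of $\mathbb{P}(\mathbf{w})$, and it fails for essentially all of the examples in this paper: the branch divisors $D_i$ with ramification indices $m_i>1$ that drive the torsion computations are exactly the codimension-one intersections with the singular locus that well-formedness forbids. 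The adjunction formula you want is the one for the quotient orbifold $(C_f\setminus\{0\})/\mathbb{C}^{*}$ with its branch divisor $\Delta$ built in (equivalently, for $c_1^{orb}$ of the pair $(\mathcal{Z}_f,\Delta)$), and that version holds without well-formedness; phrased that way, your argument goes through as written.
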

\begin{proof}See, for example, \cite{BG6} Proposition $9.2.4$, for a proof.
\end{proof}
Indeed, part of the power of using links of isolated hypersurface singularities stems from the fact that the differential geometry and topology
of the link is encoded in the weights and degree of the weighted homogenous polynomial.\\
\indent We shall make use of the following result which was established in \cite{Kol1} which says that the presence of torsion in $H_{2}(M,\mathbb{Z})$
comes directly from the presence of nonrational curves in the branch divisor. More precisely,
\begin{theorem}\emph{(Koll\'ar)} Let $f:M\rightarrow (X,\Delta=\sum_{i}(1-\frac{1}{m_{i}})D_{i})$ be a smooth
Seifert bundle, where $M$ is a compact simply connected $5$-manifold, $\Delta$
is a branch divisor. Then
$$H_{2}(M,\mathbb{Z})=\mathbb{Z}^{k}\oplus\displaystyle \sum_{i}(\mathbb{Z}/m_{i})^{2g(D_{i})}.$$
\end{theorem}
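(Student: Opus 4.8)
The plan is to compute the integral homology of $M$ from the Leray spectral sequence of the Seifert map $f\colon M\to X$, the essential input being the identification of $R^1f_*\mathbb{Z}$, and to isolate the torsion using Poincar\'e duality on $M$ together with the constraints coming from $\pi_1(M)=1$. First, since $M$ is a closed oriented simply connected $5$-manifold, Poincar\'e duality and universal coefficients give $H_1(M)=0$, $H_4(M)=0$, $H_3(M)\cong\mathbb{Z}^k$ free with $k=b_2(M)=b_3(M)$, and $H_2(M)\cong\mathbb{Z}^k\oplus H_2(M)_{\mathrm{tors}}$ with $H_2(M)_{\mathrm{tors}}\cong H^3(M,\mathbb{Z})_{\mathrm{tors}}$; so it suffices to compute $H^3(M,\mathbb{Z})_{\mathrm{tors}}$ and the rank. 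Moreover $\pi_1(M)=1$ forces the orbifold fundamental group $\pi_1^{orb}(X,\Delta)=1$ via the homotopy sequence of the circle orbibundle $M\to(X,\Delta)$, hence $\pi_1(X)=1$; consequently $H^1(X,\mathbb{Z})=0$ and (using that $X$ has only quotient singularities) $H^2(X,\mathbb{Z})$ and $H^3(X,\mathbb{Z})$ are torsion free, while $H^4(X,\mathbb{Z})\cong\mathbb{Z}$.

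Next I would identify the direct images. The fibres of $f$ are circles, so $R^qf_*\mathbb{Z}=0$ for $q\geq 2$, $R^0f_*\mathbb{Z}=\mathbb{Z}_X$, and $\mathcal{F}:=R^1f_*\mathbb{Z}$ equals $\mathbb{Z}$ away from $|\Delta|$. Working in the local Seifert model near a general point of $D_i$ --- where $M$ looks like $(\mathbb{C}\times\mathbb{C}\times S^1)/(\mathbb{Z}/m_i)$ with the cyclic group rotating one $\mathbb{C}$-factor and the $S^1$-factor --- one sees that the generic fibre is an $m_i$-fold cover of the fibre over $D_i$, which yields a short exact sequence of sheaves on $X$,
$$0\longrightarrow\mathbb{Z}_X\longrightarrow\mathcal{F}\longrightarrow\bigoplus_i(\mathbb{Z}/m_i)_{D_i}\longrightarrow 0 ,$$
the quotient being the constant sheaf on each $D_i$ (the local model acts by rotations, so there is no monodromy); if some $D_i$ is singular or reducible one reduces to its normalization and components, defining $g(D_i)$ as the sum of the geometric genera. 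The Leray spectral sequence $E_2^{p,q}=H^p(X,R^qf_*\mathbb{Z})\Rightarrow H^{p+q}(M,\mathbb{Z})$ then has only the two rows $q=0,1$, hence degenerates at $E_3$ with a single family of differentials $d_2\colon H^p(X,\mathcal{F})\to H^{p+2}(X,\mathbb{Z})$ (cup product with the Seifert Chern class), and the induced two-step filtration on $H^3$ reads
$$0\to\mathrm{coker}\bigl(d_2\colon H^1(X,\mathcal{F})\to H^3(X,\mathbb{Z})\bigr)\to H^3(M,\mathbb{Z})\to\ker\bigl(d_2\colon H^2(X,\mathcal{F})\to H^4(X,\mathbb{Z})\bigr)\to 0 .$$
Combining the long exact cohomology sequence of the sheaf sequence with $H^p(X,\bigoplus_i(\mathbb{Z}/m_i)_{D_i})=\bigoplus_iH^p(D_i,\mathbb{Z}/m_i)$, the computation $H^1(D_i,\mathbb{Z}/m_i)\cong(\mathbb{Z}/m_i)^{2g(D_i)}$, and the torsion-freeness facts from the first paragraph, a diagram chase identifies $H^1(X,\mathcal{F})_{\mathrm{tors}}=\bigoplus_i(\mathbb{Z}/m_i)^{2g(D_i)}$, shows that the two displayed differentials annihilate no torsion, and resolves the extension so that $H^3(M,\mathbb{Z})_{\mathrm{tors}}=\bigoplus_i(\mathbb{Z}/m_i)^{2g(D_i)}$; the free rank $k=b_2(M)$ falls out of the same computation with $\mathbb{Q}$-coefficients.

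The routine ingredients are the formal behaviour of the spectral sequence and the elementary computation of $H^*(D_i,\mathbb{Z}/m_i)$. The main obstacle is everything that makes the bookkeeping of the last step work: pinning down the precise structure of $\mathcal{F}$ (and its singular/reducible variants) and then controlling the Chern-class differentials $d_2$ and the extension $0\to E_\infty^{3,0}\to H^3(M,\mathbb{Z})\to E_\infty^{2,1}\to 0$ carefully enough to conclude that the surviving torsion is \emph{exactly} $\bigoplus_i(\mathbb{Z}/m_i)^{2g(D_i)}$ with nothing spurious. This is where the hypothesis $\pi_1(M)=1$ is used in an essential way, through $H^1(X,\mathbb{Z})=0$ and the torsion-freeness of $H^2(X,\mathbb{Z})$ and $H^3(X,\mathbb{Z})$, since otherwise the terms coming from $H^*(X,\mathbb{Z})$ would contribute extra torsion. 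Equivalently, one may run the argument through the orbifold Gysin sequence $\cdots\to H^{p-2}_{orb}(X,\Delta)\xrightarrow{\cup c_1^{orb}}H^p_{orb}(X,\Delta)\to H^p(M)\to H^{p-1}_{orb}(X,\Delta)\to\cdots$, in which the asserted torsion appears in $H^3_{orb}(X,\Delta)$.
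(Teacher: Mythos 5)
The paper does not prove this statement --- it is quoted verbatim from Koll\'ar's paper \cite{Kol1} and used as a black box --- so the only meaningful comparison is with Koll\'ar's own argument, and your outline is indeed his strategy (Leray spectral sequence of the Seifert map, identification of $R^1f_*\mathbb{Z}$, Poincar\'e duality to move the torsion into $H^3(M,\mathbb{Z})$). However, there is a concrete error in the key sheaf-theoretic input, and it is not a cosmetic one. In the local model $(\mathbb{C}^2\times S^1)/(\mathbb{Z}/m_i)$ the \emph{generic} fibre wraps $m_i$ times around the central fibre, so the restriction map on $H^1$ from a neighbourhood of a point of $D_i$ to a generic fibre is multiplication by $m_i$: it is \emph{injective} into the generic stalk. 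Hence the natural exact sequence runs
$$0\longrightarrow R^1f_*\mathbb{Z}\longrightarrow \mathbb{Z}_X\longrightarrow \bigoplus_i(\mathbb{Z}/m_i)_{D_i}\longrightarrow 0,$$
i.e.\ $R^1f_*\mathbb{Z}$ is a \emph{subsheaf} of $\mathbb{Z}_X$, not an extension of $\bigoplus_i(\mathbb{Z}/m_i)_{D_i}$ by $\mathbb{Z}_X$ as you wrote.

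This reversal is exactly what breaks your bookkeeping. With your sequence the long exact sequence places $\bigoplus_iH^1(D_i,\mathbb{Z}/m_i)=\bigoplus_i(\mathbb{Z}/m_i)^{2g(D_i)}$ inside $H^1(X,\mathcal{F})=E_2^{1,1}$. But $E_2^{1,1}$ contributes to $H^2(M,\mathbb{Z})$, not to $H^3(M,\mathbb{Z})$: the only differential out of it lands in the torsion-free group $H^3(X,\mathbb{Z})$, nothing maps into it, so this torsion would survive to $E_\infty^{1,1}\subset H^2(M,\mathbb{Z})$ --- contradicting the fact that $H^2(M,\mathbb{Z})\cong\mathrm{Hom}(H_2(M),\mathbb{Z})$ is torsion free when $\pi_1(M)=1$. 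Meanwhile $H^1(X,\mathcal{F})$ enters your displayed filtration of $H^3(M,\mathbb{Z})$ only through the \emph{cokernel} of $d_2\colon H^1(X,\mathcal{F})\to H^3(X,\mathbb{Z})$, which remembers nothing of the torsion of the source; so on your version the asserted torsion never reaches $H^3(M,\mathbb{Z})$ at all. With the corrected sequence the connecting maps shift by one degree and one gets $H^2(X,\mathcal{F})_{\mathrm{tors}}\cong\bigoplus_iH^1(D_i,\mathbb{Z}/m_i)$ sitting in $E_2^{2,1}$, which does feed into $H^3(M,\mathbb{Z})$ and survives because $d_2$ maps it to the torsion-free $H^4(X,\mathbb{Z})\cong\mathbb{Z}$. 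A secondary point to flag: your claim that $H^3(X,\mathbb{Z})$ is torsion free needs an argument, since $X$ has quotient singularities whose links $S^3/G$ carry torsion in $H_1$, so $X$ is only a $\mathbb{Q}$-homology manifold; Koll\'ar has to control this contribution separately.
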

\noindent In the theorem, $g(D_{i})$ denotes the genus of $D_{i}$, $m_{i}$ is the ramification index of $D_{i}$ and
$k=b_{2}(M)$.

\indent Perhaps a more useful aspect of negative Sasakian geometry is that such structures can give rise
to negative $\eta$-Einstein structures as well as Lorentzian Sasaki-Einstein structures.
Recall a smooth Riemannian manifold $M^{2n+1}$ is Sasaki $\eta$-Einstein if it is Sasakian and if the Riemannian metric $g$ on $M^{2n+1}$ satisfies a generalized Einstein condition $$Ric_g=\lambda g + \nu \eta \otimes \eta,$$
where $\lambda + \nu = 2n$ and $\eta$ is a contact $1$-form (assume dim $M >3$). Furthermore,
Lorentzian Sasaki-Einstein metrics on manifolds can be defined in a way similar to the positive definite
Sasaki-Einstein definition. We recall the following \cite{BGM}:
\begin{definition}
Let $(M^{2n+1},g)$ be a Lorentzian manifold of signature $(1,2n)$ where the
nowhere vanishing vector field $\xi$ is a time-like Killing vector field such that
$g(\xi,\xi)=-1$. We say $M$ is Sasakian if the cone $(C(M),\overline{g})=(M \times\mathbb{R}_{+},-dt^{2}+r^{2}g)$ is pseudo-K\"ahler and
Lorentzian Sasaki-Einstein if the cone is pseudo-Calabi-Yau with Einstein constant $-2n$.
\end{definition}

So, the relationship between negative Sasakian geometry on links, $\eta$-Einstein, and Lorentzian Sasaki-Einstein structures lies in the following:
\begin{theorem}
Let $L_f$ be a link of an isolated hypersurface singularity at the origin which is negative Sasakian. Then $M$
admits a negative Sasaki $\eta$-Einstein metric and Lorentzian Sasaki-Einstein metric.
\end{theorem}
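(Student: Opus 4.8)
The plan is to transport the negativity hypothesis into transverse K\"ahler geometry, solve there the (easy, negative‑sign) transverse Monge--Amp\`ere equation to obtain a negative Sasaki $\eta$-Einstein metric, and then pass to the Lorentzian setting by a sign reversal along the Reeb direction. First I would pin down the transverse picture. Because $L_f$ is the link of a weighted homogeneous polynomial, the commutative diagram of \S2 exhibits $L_f$ as a Seifert $S^1$-bundle over the K\"ahler orbifold $\mathcal{Z}_f$, a quasi-smooth hypersurface of degree $d$ in $\mathbb{P}(\mathbf{w})$, with $\pi^{*}c_{1}^{orb}(\mathcal{Z}_f)=c^{B}_{1}(\mathcal{F}_{\xi})$. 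Adjunction on $\mathbb{P}(\mathbf{w})$ gives $c_1^{orb}(\mathcal{Z}_f)=(\sum_i w_i-d)\,[H]$ with $[H]$ the restriction of the orbifold polarization, while the weighted transverse K\"ahler class $[\tfrac12\,d\eta_{\mathbf w}]_B$ is a positive multiple of that same $[H]$; thus ``negative Sasakian'' (equivalently $d-\sum_i w_i>0$, by the Proposition above) says precisely that $c^{B}_{1}(\mathcal{F}_{\xi})$ is a \emph{negative} multiple of the transverse K\"ahler class, i.e. $c_1^B$ lies in a fixed ray of basic cohomology and is represented by a negative definite basic $(1,1)$-form.

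Next I would invoke the transverse (orbifold) version of the Aubin--Yau theorem, due to El Kacimi-Alaoui (see \cite{BG6}, Ch.~7): after a $D$-homothetic transformation normalizing the transverse K\"ahler class to $-c^{B}_{1}(\mathcal{F}_{\xi})$, the transverse complex Monge--Amp\`ere equation on the transversely holomorphic foliation $\mathcal{F}_\xi$ has a solution. This produces a basic function $\varphi$ such that the deformed Sasakian structure $\mathcal{S}'=(\xi,\eta+d^{c}_{B}\varphi,\Phi',g')$, with the same Reeb field and transverse holomorphic structure, has transverse Ricci form $\rho^{T}_{g'}=\kappa\,\omega^{T}_{g'}$ for a constant $\kappa<0$. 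Using the standard relation $\mathrm{Ric}^T=\mathrm{Ric}_{g'}|_{\mathcal D}+2\,g'|_{\mathcal D}$ one reads off $\mathrm{Ric}_{g'}=\lambda g'+\nu\,\eta'\otimes\eta'$ with $\lambda+\nu=2n$ and $\lambda=\kappa-2<-2$; that is, $g'$ is a negative Sasaki $\eta$-Einstein metric on $M=L_f$. This is exactly the $\eta$-Einstein existence theorem for negative Sasakian structures in \cite{BG6}, \cite{BGM}.

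Finally I would pass from $g'$ to a Lorentzian Sasaki--Einstein metric. A further $D$-homothety (which scales the transverse Einstein constant) normalizes to $\lambda=-2n-4$, $\nu=4n+4$, so that the transverse Einstein constant equals $-(2n+2)$. Define $g^{L}:=g'-2\,\eta'\otimes\eta'$; then $g^L$ has signature $(1,2n)$, $\xi$ is a time-like Killing field with $g^{L}(\xi,\xi)=-1$, and $g^L$ retains the same transverse K\"ahler--Einstein structure as $g'$. An O'Neill-type curvature computation in which only the sign of the Reeb contribution changes (for a time-like unit Killing field $\mathrm{Ric}^T=\mathrm{Ric}_{g^L}|_{\mathcal D}-2\,g^L|_{\mathcal D}$) then yields $\mathrm{Ric}_{g^{L}}=-2n\,g^{L}$, equivalently that the Lorentzian cone $(M\times\mathbb{R}_+,\,-dt^{2}+t^{2}g^{L})$ is pseudo-Calabi--Yau; this is the negative $\eta$-Einstein $\leftrightarrow$ Lorentzian Sasaki--Einstein correspondence of \cite{BGM}. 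Hence $M$ carries both a negative Sasaki $\eta$-Einstein and a Lorentzian Sasaki--Einstein metric.

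The only genuinely hard step is the second one — solvability of the transverse Monge--Amp\`ere equation — but this is merely the transverse analogue of the classical Aubin--Yau theorem in the \emph{negative} (uniquely solvable, obstruction-free) case, available off the shelf for the transversely K\"ahler foliation, equivalently the K\"ahler orbifold $\mathcal{Z}_f$, at hand. Everything else is routine bookkeeping: tracking the effect of the $D$-homothetic transformations on the $\eta$-Einstein constants, and the signature/curvature verification identifying $g^{L}$ as the desired Lorentzian Einstein metric.
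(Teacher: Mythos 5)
Your argument is correct and takes essentially the same route as the paper, whose entire proof is the one-line citation of Theorem 17 and Proposition 23 of \cite{BGM}; what you have written is, in effect, a sketch of the proofs of those two cited results (the transverse Aubin--Yau theorem producing the negative Sasaki $\eta$-Einstein metric, followed by the $D$-homothety and the sign change along the Reeb direction yielding the Lorentzian Sasaki--Einstein metric).
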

\begin{proof}
This result follows from Theorem $17$ and Proposition $23$ of \cite{BGM}.
\end{proof}

\noindent It should be noted, therefore, that all of our examples of Lorentzian Sasaki-Einstein manifolds equivalently give noncompact Ricci flat pseudo Calabi-Yau manifolds. We are now ready to prove the main theorem.

\section{Proof of Main Theorem}

To construct negative Sasakian structures on the positive Sasakian rational homology $5$-spheres, we employ
the links of isolated hypersurface singularity approach implemented by Boyer and Galicki \cite{BG2}.
This entails finding the appropriate orbifolds in weighted projective space, subjected to the negativity condition, and possessing certain branch divisors of the appropriate genus. Then we will use a proposition in \cite{BG3} that allows one to state that the links
constructed are rational homology spheres. Using Theorems 2.2 and 2.5 allows us to determine the diffeomorphism type. Consider the following hypersurface in weighted projective space
$$\mathcal{Z}_{f}\subset \mathbb{P}(2qp,2\alpha p, 2\alpha q, \alpha (p-1))$$
where the hypersurface $\mathcal{Z}_{f}$ is defined as the zero locus of the weighted homogenous polynomial of degree $d=2qp\alpha$ (assume $q$,$p$,$\alpha$ are all distinct)
$$f=z_{0}^{\alpha}+z_{1}^{q}+z_{2}^{p}+z_{2}z_{3}^{2q}.$$ (These polynomials are of Type II in the Yau-Yu list \cite{YauYu}.) Furthermore, let us assume $gcd(q, 2\alpha p(p-1))=1$ and $(p,\alpha)=1$. With these assumptions together with $(2.1)$
it follows that $\alpha$, $q$ are odd
and $p$ is even. Therefore, we have $gcd(\alpha, 2qp)=1$ and so the link of $L_{f}$ is a simply-connected rational homology $5$-sphere by Proposition 2.1 of \cite{BG5} (or see Theorem 9.3.17 in \cite{BG6}).
There are two curves in the branch divisor: $C_{0}$ of degree $2qp$ corresponding to $z_{0}=0$ and $C_{1}$ of degree $ qp\alpha$
corresponding
to $z_{3}=0$. More precisely, $C_{0}=z_{1}^{q}+z_{2}^{p}+z_{2}z_{3}^{2q}$ and $C_{1}=z_{0}^{\alpha}+z_{1}^{q}+z_{2}^{p}$ with ramification indices
$\alpha$ and $2$ respectively. Hence,
on the hypersurface $\mathcal{Z}_{f}$, we have the branch divisor $\Delta=\frac{\alpha -1}{\alpha}C_{0}+\frac{1}{2}C_{1}.$
A negative Sasakian structure exists on $L_{f}$ as long as
\begin{equation}
2qp(\alpha-1)+\alpha(1-3p-2q)> 0.
\end{equation}
To determine the topology of $L_{f}$, we can apply Theorem 2.4. In particular, we must
compute the genus of the curves $C_{0}$ and $C_{1}$. Recall the genus formula (see for example \cite{Flet}) which we
shall use
\begin{equation}2g(C)=\frac{d^{2}}{w_{0}w_{1}w_{2}}-d\sum_{i<j}\frac{gcd(w_{i},w_{j})}{w_{i}w_{j}}+
\sum_{i}\frac{gcd(d,w_{i})}{w_{i}}-1.\end{equation}
where $C$ is a curve of degree in $d$ in the weighted projective space $\mathbb{P}(w_{0},w_{1},w_{2})$
By applying this formula (and using the gcd conditions defined above) to $C_{0}\subset \mathbb{P}(2p,2q,p-1)$ and $C_{1}\subset \mathbb{P}(qp,\alpha p, \alpha q)$, we find
$$2g(C_{0}) =\frac{(2qp)^2}{(2p)(p-1)(2q)}-2qp\left(\frac{2}{4qp}+\frac{1}{2p(p-1)}+\frac{1}{2q(p-1)}\right)+\left(\frac{1}{p-1}+1\right)=q-1.$$
Similarly, we can calculate the genus of $C_{1}\subset \mathbb{P}(qp,\alpha p, \alpha q)$ and here we find
$$2g(C_{1})=1-qp\alpha\left(\frac{p}{q\alpha p^2}+\frac{\alpha}{\alpha^{2}pq}+\frac{q}{q^{2}\alpha p}\right)+2=0.$$
This shows that $C_1$ is a rational curve which means that this curve will not contribute to torsion in $H_{2}(L_{f},\mathbb{Z})$, by Theorem 2.4.
So we have that $L_f$ is a negative Sasakian rational homology $5$ sphere and by Theorem 2.2, we have
$L_{f}$ is diffeomorphic to $\frac{q-1}{2}M_{\alpha}$ and hence $H_{2}(L_{f},\mathbb{Z})=(\mathbb{Z}/\alpha )^{q-1}$.
Note that $q$ is assumed to be odd.

By choosing the appropriate weights, we can obtain some of the positive
rational homology spheres in part $1$ of Theorem 1.1 which, due to our construction now admit negative Sasakian structures as well. We can get $2M_3$ by letting $q=5$, $\alpha = 3$ and $p=4$. Let $q=7$, $\alpha = 3$ and $p=4$, getting the rational homology sphere with torsion $(\mathbb{Z}/3)^6$. It should be noted that particular choices of $p$ will exhibit other negative Sasakian structures on a given rational homology sphere.
Again using the Type II polynomials as described in \cite{YauYu}, we can obtain the other rational homology spheres in the list. Since the computation is nearly identical to the above, the remaining cases are the first three entries in Table $1$ at the end of this section. The condition
that $m\neq 30j$ ensures that $M_{m}$ is positive Sasakian, as established in \cite{Kol1}.  \\
\indent For the second part of the theorem, we study the following
orbifold in weighted projective space cut out by the Brieskorn-Pham polynomial equation $f=z_{0}^{2}+z_{1}^{2l}+z_{2}^{l}+z_{3}^{2nl}=0$
of degree $d=2nl$ in the weighted projective space $\mathbb{P}(nl,n,2n,1)$. The negative Sasakian condition is
$n(l-3)>1$ so let $l \geq 4, n\geq 2.$ Note that we have a branch divisor for $z_3=0$ with ramification index $n$ so this generates the curve
$C=z_{0}^{2}+z_{1}^{2l}+z_{2}^{l}\subset\mathbb{P}(nl,n,2n)=\mathbb{P}(l,1,2)$ where this curve has degree $d=2l.$
As before, we compute the genus of this curve  $H_2(L_{f},\mathbb{Z})$ obtaining:$$2g(C_{f})=l-gcd(l,2).$$
Since $l$ is an integer greater than or equal to $4$, part two of the theorem is established.\\
\indent To establish the third part of the theorem, we need to construct a link
for which the second Betti number is arbitrary. To this end, we construct the orbifold hypersurface
$f=z_{0}^{k+1}+z_{1}^{k+1}+z_{2}^{k+1}+z_{0}z_{3}^{n}$ in the weighted projective space $\mathbb{P}(n,n,n,k)$ of degree $n(k+1).$ This was
investigated in an unpublished part of the author's thesis \cite{Gmz1}. Assume that $gcd(n,k)=1$. To calculate the second Betti number of the
link, we use the formula devised by Milnor and Orlik \cite{MilnorOrlick}. Recall that this involves computing
$$divisor\Delta(t)=\displaystyle\prod_{i}\big(\frac{\Lambda_{u_i}}{v_{i}}-1\big)=1+\sum a_{j}\Lambda_{j}$$
using the relation $\Lambda_{a}\Lambda_{b} = gcd(a,b)\Lambda_{lcm(a,b)}.$ The symbol $\frac{u_{i}}{v_{i}}$
is the irreducible representation of $\frac{d}{w_{i}}$, where $d$ is the degree of the polynomial and
$w_{i}$ are the weights. The formula for the second
Betti number of the link is $$b_{2}(L_{f})=1+\sum_{i}a_{i}.$$ Applying this procedure to
our example, we obtain \begin{equation}
divisor\Delta(t)=\big(\Lambda_{k+1}-1\big)^{3}\big(\frac{1}{k}\Lambda_{n(k+1)}-1\big)=k^{2}\Lambda_{n(k+1)}-(k^{2}-k+1)\Lambda_{k+1}+1\\
\end{equation}
and therefore $b_{2}(L_{f})=k$. The negative Sasakian condition is easily seen to be $n(k-2)>0.$\\
Of course, there is a branch divisor for $z_{3}=0$, with ramification index $n$, corresponding to $C=z_{0}^{k+1}+z_{1}^{k+1}+z_{2}^{k+1}\subset\mathbb{P}(1,1,1)=\mathbb{P}^{2}$ so using the usual genus formula for curves, we find $2g(C)=k(k-1)$. Then we compute $H_{2}(L_{f},\mathbb{Z})$ as above and we find, then,
that $$H_{2}(L_{f},\mathbb{Z})=\mathbb{Z}^{k}\oplus (\mathbb{Z}/n)^{k(k-1)}.$$ Hence, for each positive integer $k\geq 3$ we may
choose an $n$ such that the resulting torsion subgroup of $H_{2}(L_{f},\mathbb{Z})$ is not on the list in $(1.1)$\\
\indent  Now, since all of our examples are negative Sasakian links, by Theorem 2.7 they are
negative Sasaki $\eta$-Einstein as well as Lorentzian Sasaki-Einstein. The fact
that our examples are spin follows from Corollary 11.8.5 of \cite{BG6}.
This concludes the proof.\\

\begin{center}
\parbox{4.00in}{\small \textbf{Table 1}. Negative Sasakian rational homology $5$-spheres}
\vbox{\[\begin{array}{|c|c|c|c|} \hline \textbf{w}=(w_0,w_1,w_2,w_3) & {\rm
link\hspace{.1cm} L_{f}}
&degree& manifold \hl\hline
(15,12,4,28)\hspace{.1cm} & z_{0}^{4}+z_{1}^{5}+z_{2}^{15}+z_{2}z_{3}^2 & 60 & 2M_{4} \hl
(42,35,15,65)& z_{0}^{5}+z_{1}^{6}+z_{2}^{14}+z_{2}z_{3}^3  & 210 & 2M_{5}\hl
(68,51,6,33)& z_{0}^{3}+z_{1}^{4}+z_{2}^{34}+z_{2}z_{3}^{6} & 204 & 4M_{3}  \hl
(p,m,m((p+1)/4),m((p-1)/2)& z_{0}^{m}+z_{1}^{p}+z_{2}^{2}z_{3}+z_{3}^{2}z_{1} & mp & M_{m},m>4  \hl

\end{array}\]}
\vspace{3mm}
\end{center}

The entry in the very bottom row was used in \cite{BG3,BG5} to establish Sasaki metrics of positive Ricci curvature on certain simply-connected rational homology $5$-spheres and it was also realized in the author's thesis \cite{Gmz1} that one could use those links to obtain
the existence of negative Sasakian structures on certain rational homology spheres. An assumption is needed on
$m,p$ to make things work out and that is for each $m$ choose a prime $p$ of the form $p=4l-1$ such that $(m,p)=1$. The negativity condition
is easily seen to be $(m-4)(l-1)> 3$.

\section{Concluding Remarks}

\indent First, in positive Sasaki-Einstein geometry of Smale manifolds, an open problem is the following \cite{BG6}:
Suppose $k=b_{2}(M)>9.$ Then $M$ admits a Sasaki-Einstein structure if and only if $H_{2}(M,\mathbb{Z})_{tor}=0$
i.e. $M$ is diffeomorphic to a $k$-fold connected sum of $S^{2}\times S^{3}.$ Said differently,
mixed Smale Sasaki-Einstein $5$-manifolds with second Betti number bigger than $9$ do not exist.
By our main thorem, we see that such a statement is not true in the Lorentzian Sasaki-Einstein case.\\
\indent Secondly, because of the numerous examples of positive Sasakian manifolds that also admit
negative Sasakian structures, one is led to speculate that perhaps every positive Sasakian
manifold admits a negative Sasakian structure. The evidence seems to suggest that this is true for dimension
five. For the general case, it is not so clear.

\end{document}